\pgfplotsset{compat=1.18}
\colorlet{Pcolor}{black!40!white}
\colorlet{PcolorLight}{black!20!white}
\tikzstyle{fill_P_side}=[fill=Pcolor, draw=none]
\tikzstyle{fill_P_side_light}=[fill=PcolorLight, draw=none]
\declaretheorem[numberwithin=section,
name=Theorem,
]{theorem}
\declaretheorem[numberlike=theorem,
name=Lemma,
]{lemma}
\declaretheorem[numberlike=theorem,
name=Corollary,
]{corollary}
\declaretheorem[numberlike=theorem,
name=Definition,
style=definition,
]{definition}
\newcommand{\eps}{\ensuremath{\varepsilon}}
\newcommand{\NN}{\ensuremath{\mathds{N}}}  %
\newcommand{\RR}{\ensuremath{\mathds{R}}}  %
\newcommand{\cP}{\ensuremath{\mathcal{P}}} %
\newcommand{\cF}{\ensuremath{\mathcal{F}}} %
\DeclareMathOperator*{\interior}{int}
\DeclareMathOperator{\relu}{ReLU} %
\DeclareMathOperator{\CPA}{CPA}   %
\newcommand{\cl}[1]{\overline{#1}}  %
\DeclarePairedDelimiter{\set}{\lbrace}{\rbrace}
\DeclarePairedDelimiter\floor{\lfloor}{\rfloor}
\newcommand{\titleText}{Bounds on the Number of Pieces in Continuous Piecewise Affine Functions}
\title{\titleText}
\author{ \href{https://orcid.org/0009-0001-9695-3812}{\includegraphics[scale=0.06]{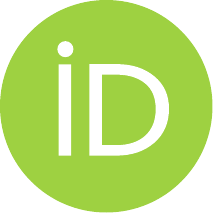}\hspace{1mm}Leo Zanotti}%
		\\
	Institute of Optimization and Operations Research\\
	Ulm University, Germany\\
	\texttt{leo.zanotti@uni-ulm.de}
}
\begin{document}
\maketitle

\begin{abstract}
	The complexity of continuous piecewise affine (CPA) functions can be measured by the number of pieces $p$ or the number of distinct affine functions $n$.
	For CPA functions on $\RR^d$, this paper shows an upper bound of $p=O(n^{d+1})$ and constructs a family of functions achieving a lower bound of $p=\Omega(n^{d+1-\frac{c}{\sqrt{\log_2(n)}}})$.
\end{abstract}

\section{Introduction}\label{sec:Introduction}
A continuous piecewise affine (CPA) function, formally defined in \autoref{sec:Defs}, is a continuous function whose domain can be covered by a finite collection of connected sets, such that the function is affine on each set.  
Such functions can be expressed in various analytic forms that involve maxima and minima of affine functions \citep{Tarela1990MaxMin,Koutschan2023,Kripfganz1987}. 
Based on this, \citet{arora2018understanding} showed that every CPA function can be represented by a neural network with the $\relu$ activation function.  
To characterize the expressivity of such neural networks, one seeks bounds on the network size required to represent a CPA function of given complexity \citep{Hertrich2023TowardsLowerBounds,Chen2022neurCompl,Hanin2019}. 
These bounds are based on two different complexity measures for a CPA function $f$: 
the number of pieces $p$, which refers to the sets in its defining cover, and the number of affine components $n$, which counts the distinct affine functions in its piecewise definition.
This naturally raises the question of how these two quantities relate.

Clearly, $p \geq n$, and there exist CPA functions achieving equality. 
However, the maximum possible number of pieces for a given $n$ remains an open question.  
For functions on $\RR^d$, this paper proves an upper bound of $p = O(n^{d+1})$ and shows the existence of CPA functions satisfying a lower bound of $p=\Omega(n^{d+1-\frac{c}{\sqrt{\log_2(n)}}})$. Thus, the upper bound is nearly sharp.

\subsection{Definition of CPA Functions}\label{sec:Defs}

For a subset $P\subset \RR^d$, let $\cl{P}$, $\interior{P}$, and $\partial P$ denote its closure, interior, and boundary, respectively.
For $n\in\NN$, define $[n]:=\set{1,\dots,n}$.
Continuous piecewise affine (CPA) functions are defined as follows.

\begin{definition}\label{def:CPA}
	A continuous function $f: \RR^d \to \RR$ is said to be \emph{CPA} if there exists a finite set $\cP$ of open connected subsets of $\RR^d$ such that $\RR^d = \bigcup_{P\in\cP} \cl{P}$ and $f$ is affine on each $P\in\cP$.
	The sets $P$ are the \emph{pieces} of $f$, and for every $P\in\cP$, the affine function $f|_{P}$ is the \emph{affine component} of $f$ corresponding to $P$.
\end{definition}

Note that the set of pieces is not unique, but we focus on sets with minimal cardinality.
As the proof of \autoref{prop:min_collection_eq_max_pieces} will show, this ensures that the pieces are disjoint and their closures intersect only at their boundaries, i.e., $\cl{P_1}\cap\cl{P_2}=\partial P_1\cap \partial P_2$ for any distinct pieces $P_1,P_2$  
This property is crucial for constructing the lower bound in \autoref{sec:LB}.

\citet{Chen2022neurCompl} define CPA functions differently, requiring the pieces to be closed and connected.  
However, contrary to their claim in Lemma 11(d), this allows cases where $\partial P_1 \cap \interior{P_2} \neq \emptyset$, as shown in \autoref{fig:bad_piece}.

\begin{figure}[h!]
	\centering
	\def\zmax{2.5}
\tikzsetnextfilename{badPiece_top}
\begin{tikzpicture}[very thick]
	\begin{axis}[
		title={top view},
		width=.4\textwidth,
		axis equal image,
		xmin=0, xmax=4,
		ymin=-4,ymax=0,
		xtick={0,...,4},
		ytick={-4,...,0},
		xlabel={$x$},
		ylabel={$y$},
		legend pos=outer north east,
		legend style={cells={align=center}}
		]
		
		\draw[fill=red, draw=none]
		(axis cs: 0,0) -- (axis cs: 1,-1) -- (axis cs: 1,0) -- cycle;
		\draw[fill=red, draw=none]
		(axis cs: 3,0) -- (axis cs: 3,-3) -- (axis cs: 4,-4) -- (axis cs: 4,0) -- cycle;
		\addplot[
		color=red,
		ultra thick,
		forget plot,
		] coordinates {
			(0,0)(1,-1)
		};
		\addplot[
		color=red,
		ultra thick,
		forget plot,
		] coordinates {
			(3,-3)(4,-4)
		};
		\addplot[
		color=red,
		ultra thick,
		dashed,
		forget plot,
		] coordinates {
			(0,0)(4,-4)
		};
		
		\begin{scope}[on background layer]
			\draw[fill=blue, draw=none]
			(axis cs: 1,0) -- (axis cs: 1,-1) -- (axis cs: 2,-2) -- (axis cs: 2,0) -- cycle;
			\draw[fill=green, draw=none]
			(axis cs: 2,0) -- (axis cs: 2,-2) -- (axis cs: 3,-3) -- (axis cs: 3,0) -- cycle;
			
			\draw[fill_P_side]%
			(axis cs: 0,0) -- (axis cs: 1,-1) -- (axis cs: 2,-1) -- (axis cs: 3,-3) -- (axis cs: 4,-4) -- (axis cs: 0,-4) -- cycle;
		\end{scope}

		\addlegendimage{empty legend}
		\addlegendentry{\hspace{-.6cm}affine \\ \hspace{-.6cm}component}
		\addlegendimage{area legend, fill=red}
		\addlegendentry{$-x$}
		
		\addlegendimage{area legend, fill=blue}
		\addlegendentry{$-1$}
		
		\addlegendimage{area legend, fill=green}
		\addlegendentry{$3-2x$}
		
		\addlegendimage{area legend, fill=Pcolor}
		\addlegendentry{$y$}
	\end{axis}
\end{tikzpicture}
	\def\zmax{2.5}
\tikzsetnextfilename{badPiece_slice}
\begin{tikzpicture}[very thick]
	\begin{axis}[
		title={$y=0$ slice},
		width=.4\textwidth,
		axis equal image,
		xmin=0, xmax=4,
		ymin=-4,ymax=0,
		xtick={0,...,4},
		ytick={-4,...,0},
		xlabel={$x$},
		ylabel={$f(x,0)$},
		ylabel near ticks,
		]
		
		\addplot[
		color=blue,
		ultra thick,
		] coordinates {
			(1,-1)(2,-1)
		};
		
		\addplot[
		color=green,
		ultra thick,
		] coordinates {
			(2,-1)(3,-3)
		};
		
		\addplot[
		color=red,
		ultra thick,
		] coordinates {
			(0,0)(1,-1)
		};
		\addplot[
		color=red,
		ultra thick,
		] coordinates {
			(3,-3)(4,-4)
		};

	\end{axis}
\end{tikzpicture}
	\caption{$f(x,y):=\min\big(y,\min\big(\max(-x,-1),\max(3-2x,-x)\big)\big)$. 
		Left: Piecewise definition of $f$.
		Right: Slice of $f$ at $y=0$.
		If the pieces are only required to be closed connected sets, the dashed line $l=\set{(x,y):y=-x}$ can belong to both the red and grey pieces, as $f(x,y)=y=-x$ along this line. 
		Thus, defining $f$ with closed connected sets would require only four pieces, whereas under \autoref{def:CPA}, at least five pieces are necessary.}
	\label{fig:bad_piece}
\end{figure}
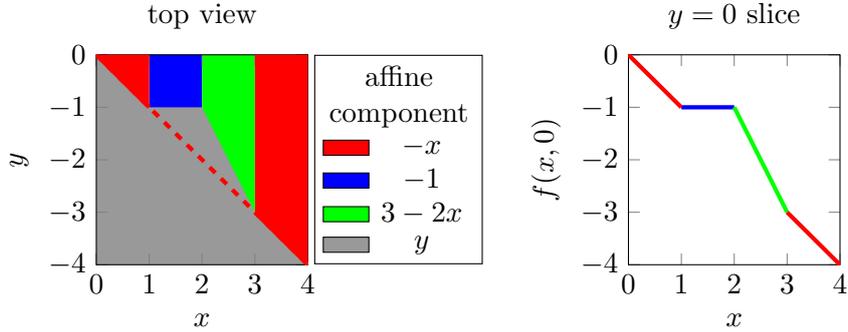

\section{Upper Bound}
The best currently known upper bound on the minimum number of pieces is the following.
\begin{lemma}\label{lem:UB_Chen}
	Let $f:\RR^d\to\RR$ be a CPA function with $n$ distinct affine components. 
	Let $\lambda$ be the minimum number of convex sets needed to satisfy the conditions of \autoref{def:CPA}.
	Then,
	\begin{equation*}
		n\leq \lambda \leq \min\left(\sum_{i=0}^d\binom{\frac{n^2-n}{2}}{i}, n!\right)=O(n^{2d})
	\end{equation*}
\end{lemma}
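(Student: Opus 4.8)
The plan is to prove the two inequalities separately. The lower bound $n\le\lambda$ is the routine direction: in any admissible collection $\cP$ of convex pieces, every piece $P$ is open and nonempty, so $f|_P$ is one of the $n$ intrinsic affine components of $f$. Each such component coincides with $f$ on some full-dimensional open set $U$; since the closures of the pieces cover $U$, some piece meets $U$ in its interior and hence realizes that component. Thus all $n$ components must occur, and as each piece carries a single affine function, $\lambda\ge n$.

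For the upper bound I would work with the $n$ distinct affine components $a_1,\dots,a_n$ and, for each pair $i<j$, the coincidence locus $H_{ij}:=\set{x\in\RR^d : a_i(x)=a_j(x)}$. Because $a_i\neq a_j$, each $H_{ij}$ is empty or a hyperplane, so at most $\binom{n}{2}=\frac{n^2-n}{2}$ distinct hyperplanes arise. I would then pass to the arrangement they generate: its open cells are convex, open, connected, and their closures cover $\RR^d$, so if $f$ is affine on each cell, the cells form an admissible convex collection and $\lambda$ is bounded by their number.

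The heart of the argument, and the step I expect to require the most care, is showing that $f$ is affine on every open cell $C$. The point is that no $H_{ij}$ meets $C$, so no two distinct components agree anywhere on $C$. Setting $S_k:=\set{x\in C : f(x)=a_k(x)}$, continuity of $f-a_k$ makes each $S_k$ relatively closed in $C$, the $S_k$ cover $C$, and they are pairwise disjoint, since $x\in S_i\cap S_j$ with $i\neq j$ would give $x\in H_{ij}\cap C=\emptyset$. A connected set admits no partition into two or more nonempty, pairwise disjoint, relatively closed subsets, so a single $S_k$ equals $C$ and $f=a_k$ there. This is exactly what forbids $f$ from switching affine components inside a cell, legitimizing the use of cells as pieces.

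It then remains to bound the number of cells in two ways. The classical estimate for an arrangement of $m$ hyperplanes in $\RR^d$ gives at most $\sum_{i=0}^d\binom{m}{i}$ cells; combined with $m\le\binom{n}{2}$ and monotonicity in $m$, this yields the first term. For the second, each cell fixes the complete ordering of $a_1(x),\dots,a_n(x)$, and distinct cells must differ in the sign of some $a_i-a_j$ and hence in this ordering, so the cell-to-ordering map is injective and the number of cells is at most $n!$. Taking the minimum and noting that the leading term $\binom{\binom{n}{2}}{d}$ grows like $O(n^{2d})$ gives the final asymptotic estimate.
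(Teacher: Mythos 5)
Your proof is correct, but it is worth being clear about how it relates to the paper: the paper does not actually prove this lemma itself. It cites Lemma~1 of \citet{Chen2022neurCompl}, which is stated for closed convex pieces, and merely observes that since interiors of convex sets are open and connected, the bound transfers to the open pieces of \autoref{def:CPA}. What you have written is a self-contained reconstruction of the argument behind that cited result: the arrangement of the at most $\binom{n}{2}$ coincidence hyperplanes $H_{ij}$, the connectedness argument forcing $f$ to be affine on each open cell, the classical cell count $\sum_{i=0}^d\binom{m}{i}$, and the injection of cells into strict orderings of $a_1(x),\dots,a_n(x)$ giving the $n!$ term. Your route buys independence from the literature, and it produces open convex cells directly, so the closed-to-open adaptation the paper performs is not even needed; the paper's citation buys brevity. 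Two one-line details you should insert to make your argument airtight: first, the claim that the $S_k$ cover $C$ needs the fact that $f(x)\in\set{a_1(x),\dots,a_n(x)}$ for \emph{every} $x\in\RR^d$, which holds because the closures of the pieces cover $\RR^d$ and, by continuity, $f$ agrees on $\cl{P}$ with the affine extension of $f|_P$; second, in your lower bound, the step ``some piece meets $U$'' uses that a nonempty open set meeting $\cl{P}$ must already meet $P$. With these justifications added, your proof is complete and correct.
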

\begin{proof}
	For closed convex pieces, this would correspond to Lemma 1 in \citet{Chen2022neurCompl}.
	Since the interiors of convex sets are open and connected, the result extends to CPA functions as defined in \autoref{def:CPA}.
\end{proof}

The following theorem shows that the exponent can be improved.
\begin{theorem}\label{thm:many_pieces_UB}
	In the setting of \autoref{lem:UB_Chen}, we have
	\begin{equation*}
		\lambda=O(n^{d+1})
	\end{equation*}
\end{theorem}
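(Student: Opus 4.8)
The plan is to refine the decomposition one affine component at a time, rather than forming the common refinement of all $\binom{n}{2}$ separating hyperplanes at once (which is exactly what produces the wasteful exponent $2d$ in \autoref{lem:UB_Chen}). Write the distinct affine components as $a_1,\dots,a_n$, and for each $k\in[n]$ let $R_k$ be the closure of the union of all pieces on which $f$ equals $a_k$. For $l\neq k$ put $H_{kl}:=\set{x\in\RR^d:a_k(x)=a_l(x)}$, which is a hyperplane whenever $a_k-a_l$ is nonconstant (and is empty or all of $\RR^d$ otherwise), and let $\cA_k$ be the arrangement of the at most $n-1$ hyperplanes $\set{H_{kl}:l\neq k}$ in $\RR^d$. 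The whole argument then reduces to the claim that the open cells of $\cA_k$ lying inside $R_k$, taken over all $k$, form a valid convex decomposition in the sense of \autoref{def:CPA}.

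First I would establish the structural heart of the proof: $\partial R_k\subseteq\bigcup_{l\neq k}H_{kl}$. Indeed, if $x\in\partial R_k$ then continuity of $f$ gives $f(x)=a_k(x)$, while $x$ also lies in the closure of an adjacent piece carrying some other component $a_l$, whence $f(x)=a_l(x)$; therefore $a_k(x)=a_l(x)$, i.e.\ $x\in H_{kl}$. Since $\partial R_k$ is contained in the union of the hyperplanes defining $\cA_k$, the set $R_k$ is a union of closed cells of $\cA_k$, and $f=a_k$ on all of $R_k$ by continuity. The open cells of $\cA_k$ contained in $R_k$ are then open, convex (hence connected), $f$ is affine on each, and as $k$ ranges over $[n]$ their closures cover $\RR^d$ because $\RR^d=\bigcup_k R_k$. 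This is a valid convex decomposition, so $\lambda$ is at most its cardinality.

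Finally I would count. Applying the same hyperplane-arrangement bound already invoked in \autoref{lem:UB_Chen}, the arrangement $\cA_k$ of at most $n-1$ hyperplanes in $\RR^d$ has at most $\sum_{i=0}^d\binom{n-1}{i}=O(n^d)$ cells, so certainly at most that many cells inside $R_k$. Summing over the $n$ components gives $\lambda\le\sum_{k=1}^{n}O(n^d)=O(n^{d+1})$, as required. The main obstacle is conceptual rather than computational: the entire improvement comes from never refining by all $\binom{n}{2}$ hyperplanes simultaneously, and the crucial point making this legitimate is the boundary claim $\partial R_k\subseteq\bigcup_{l\neq k}H_{kl}$, which is precisely where continuity of $f$ is used. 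A secondary issue to treat carefully is degeneracy — parallel components giving empty $H_{kl}$, or several components coinciding along a common locus — but such degeneracies only decrease the number of cells of each $\cA_k$, so they leave the bound intact.
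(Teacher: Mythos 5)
Your proof is correct and is essentially the same as the paper's: the paper lifts the $n$ components to non-vertical hyperplanes $H_i\subset\RR^{d+1}$ and projects the facets of each $H_i$ back to $\RR^d$, and those projections are exactly the open cells of your arrangement $\cA_k$ contained in $R_k$, counted identically ($O(n^d)$ cells per component, summed over $n$ components). The only difference is presentational: where the paper runs a connectedness/clopen argument on each projected facet to show $f=f_i$ throughout it, you prove the equivalent boundary containment $\partial R_k\subseteq\bigcup_{l\neq k}H_{kl}$ directly in $\RR^d$.
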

\begin{proof}
	Let $\set{f_i}_{i=1}^n$ be the $n$ distinct affine components of $f$. Their graphs
	\begin{equation*}
		H_i := \{(x, f_i(x)) : x \in \mathbb{R}^d\} \subset \mathbb{R}^{d+1},
	\end{equation*}
	define hyperplanes in $\RR^{d+1}$. These hyperplanes are non-vertical, meaning that the projection $\Pi:\RR^{d+1}\to\RR^d$, given by 
	\begin{equation}\label{eq:def:Projection}
		\Pi(x_1,\dots,x_d,y):=(x_1,\dots,x_d),
	\end{equation}
	is injective on each $H_i$.
	For $i\in [n]$, let $\cF_i^\circ$ be defined as \begin{equation*}
		\cF_i^\circ:= \text{the family of connected components of } H_i\setminus \bigcup_{j\neq i} H_j.
	\end{equation*}
	The following subset contains the components that intersect the graph of $f$:
	\begin{equation*}
		\cF_i:=\set{F\in \cF_i^\circ : f(x)=f_i(x) \text{ for some } x\in \Pi F}
	\end{equation*}
	Now, the family of sets \begin{equation*}
		\cP:= \set{\Pi F: i\in [n], F\in\cF_i}
	\end{equation*}	
	defines admissible convex pieces for $f$:\begin{itemize}
		\item In the discrete geometry literature (e.g., \citet{Matousek2002_Discrete_Geometry}), the set $\bigcup_{i\in [n]} \cF_i^\circ$ is known as the set of facets of the arrangement of $\set{H_i}_{i\in[n]}$. 
		Any facet $F\in\cF_i^\circ$ is convex and open in the subspace $H_i$. Since $H_i$ is non-vertical and $\Pi$ is a linear projection, $\Pi F$ is an open convex set.
		\item $f|_P$ is affine for all $P \in \cP$:
		Let $P=\Pi F$ for some $F\in\cF_i$, and $i\in [n]$.
		By the definitions of $\cF_i^\circ\supseteq \cF_i$ and $H_i$, we have $f_i(x)\neq f_j(x)$ for all $x\in\Pi F$ and all $j\neq i$.
		Since $f-f_j$ is continuous for every $j\neq i$, it follows that the set $X:=\set{x\in \Pi F: f(x)=f_i(x)}$ is open.
		
		Moreover, since $f-f_i$ is continuous, we have $f(x)=f_i(x)$ for all $x\in\partial X$. 
		As $X$ is open, we obtain $x \notin \Pi F$ for all $x \in \partial X$. Given that $X \subseteq \Pi F$, this implies $\partial X \subseteq \partial (\Pi F)$.
		
		Furthermore, $X$ is non-empty by the definition of $\cF_i$. Since $\Pi F$ is connected, we conclude that $X = \Pi F$. Indeed, for any $y \in \Pi F$ and any $x \in X$, there exists a path $\gamma_{xy} \subset \Pi F$ connecting $y$ to $x$. In particular, $\gamma_{xy}\cap \partial X=\emptyset$, which ensures that $y \in X$.
		
		Altogether, we get that $f(x)=f_i(x)$ for all $x\in\Pi F$.
		\item $Q:=\bigcup_{P\in \cP}\cl{P}$ covers $\RR^d$:
		Let $x\in\RR^d$ be arbitrary. Since all the $f_i$ are distinct, the sets \begin{equation*}
			H_{ij}:=\set{x\in\RR^d:f_i(x)= f_j(x)},\quad i\neq j,
		\end{equation*}
		are affine subspaces of dimension $d-1$ or empty.
		Let $\eps>0$ be arbitrary, and consider the ball $B_\eps(x)$ of radius $\eps$ centered at $x$. We obtain \begin{equation*}
			B_\eps(x)\setminus\bigcup_{i\neq j} H_{ij}\neq \emptyset.
		\end{equation*}
		This implies that there exists $x_\eps\in B_\eps(x)$ and some $i_\eps\in[n]$ such that \begin{equation*}
			f(x_\eps)=f_{i_\eps}(x_\eps)\neq f_i(x_\eps)\quad\forall i\neq i_\eps.
		\end{equation*}
		Thus, there exists $F\in \cF_{i_\eps}^\circ$ with $x_\eps\in \Pi F$. In particular, $F\in \cF_{i_\eps}$, so $x_\eps\in \Pi F\in \cP$.
		Since $\eps>0$ was arbitrary and $Q$ is closed, we conclude that $x\in Q$.
	\end{itemize}
	It is well known (see, e.g., \citet{Matousek2002_Discrete_Geometry}) that the number of facets is bounded by
	\begin{equation*}
		\sum_{i=1}^n\abs{\cF_i^\circ}\leq \sum_{i=1}^n\sum_{k=0}^{d}\binom{n-1}{k}\leq\sum_{i=1}^n (d+1)n^d=O(n^{d+1}).
	\end{equation*}
	Therefore, we conclude that $\abs{\cP}=O(n^{d+1})$.
\end{proof}

\section{Lower Bound}\label{sec:LB}
Rather than considering a minimum number of pieces, this section focuses on maximal pieces.
\begin{definition}
	Let $f:\RR^d\to\RR$ be a CPA function with pieces $\cP$. 
	A piece $P\in\cP$ is called \emph{maximal} if there does not exist a nonempty set $U\subseteq\RR^d\setminus P$ such that $P\cup U$ is open and connected, and $f|_{P\cup U}$ remains affine.
\end{definition}
The following lemma establishes the equivalence between a minimal set of pieces and maximal pieces.
\begin{lemma}\label{prop:min_collection_eq_max_pieces}
	For any $\CPA$ function $f$, a set of pieces $\cP$ satisfying \autoref{def:CPA} has minimum cardinality if and only if every $P\in\cP$ is maximal.
\end{lemma}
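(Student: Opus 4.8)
The plan is to single out one canonical collection of pieces that is simultaneously the set of \emph{all} maximal pieces and a collection of \emph{minimum} cardinality; once this is done, both implications follow almost formally by comparing an arbitrary admissible $\cP$ against it. Writing $f_1,\dots,f_n$ for the distinct affine components as in the proof of \autoref{thm:many_pieces_UB}, I would set $C_i:=\set{x\in\RR^d: f(x)=f_i(x)}$. Each $C_i$ is closed since $f-f_i$ is continuous, the $C_i$ cover $\RR^d$ (every point lies in the closure of some piece, and continuity forces $f$ to equal that piece's component there), and the interiors $\interior C_i$ are pairwise disjoint, because two distinct affine functions cannot agree on a nonempty open set. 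Let $\mathcal M$ be the family of connected components of $\bigcup_i \interior C_i$; since these open sets are disjoint, every component lies in a single $\interior C_i$, so $f$ restricted to it is affine.

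The first substantive step is to show that the maximal pieces are exactly the members of $\mathcal M$. On one hand, a component $M\subseteq\interior C_i$ cannot be enlarged: if $M\cup U$ were open, connected, and carried an affine restriction of $f$, then that restriction would agree with $f_i$ on the open set $M$ and hence everywhere on $M\cup U$, forcing $M\cup U\subseteq\interior C_i$ and therefore $U\subseteq M$ by maximality of the component. On the other hand, any maximal piece $P$ satisfies $P\subseteq\interior C_{i}$ for the $i$ with $f|_P=f_i$, and the component $M$ of $\interior C_i$ containing $P$ is itself an admissible enlargement, so maximality gives $P=M\in\mathcal M$.

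Next I would verify that $\mathcal M$ is an admissible collection in the sense of \autoref{def:CPA} and that it has minimum cardinality. Openness, connectedness, and affineness are already in hand. For covering and finiteness I would use the key density fact: for \emph{any} admissible $\cP$, finiteness gives $\cl{\bigcup_{P\in\cP}P}=\bigcup_{P\in\cP}\cl P=\RR^d$, so $\bigcup_{P\in\cP}P$ is dense. Applying this to $\cP$ itself, each nonempty open $M\in\mathcal M$ must meet some piece $P$; comparing affine components on the open overlap forces $P\subseteq M$. This yields a surjection $\cP\twoheadrightarrow\mathcal M$, $P\mapsto$ (the component containing $P$), whence $\abs{\mathcal M}\le\abs{\cP}$ for every admissible $\cP$; in particular $\mathcal M$ is finite, so $\bigcup_{M\in\mathcal M}\cl M=\cl{\bigcup_{M}M}=\RR^d$ (the last closure being all of $\RR^d$ because $\bigcup_i\interior C_i$ contains the dense complement of the finitely many hyperplanes on which two components coincide). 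Thus $\mathcal M$ is admissible and achieves the minimum $\abs{\mathcal M}$.

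With these facts the equivalence is short. If every piece of $\cP$ is maximal then $\cP\subseteq\mathcal M$, and since the surjection $\cP\twoheadrightarrow\mathcal M$ sends each component to itself, it forces $\mathcal M\subseteq\cP$, so $\cP=\mathcal M$ has minimum cardinality. Conversely, if $\cP$ has minimum cardinality then $\abs{\cP}=\abs{\mathcal M}$, so the surjection is a bijection; were some $P_0$ non-maximal, it would sit strictly inside its component $M_0\in\mathcal M$, and bijectivity would make $P_0$ the only piece mapped into $M_0$, leaving the nonempty open set $M_0\setminus P_0$ disjoint from every piece of $\cP$, contradicting density. I expect the main obstacle to be this last coupling of the counting argument with the density statement, together with the care needed to prove that $\mathcal M$ is finite and covers $\RR^d$; the topological subtlety that pieces of an arbitrary admissible collection may a priori overlap is exactly what the bijection in the minimal case rules out.
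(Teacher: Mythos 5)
Your construction of the canonical family $\mathcal{M}$ (the connected components of $\bigcup_i \interior C_i$), the identification of $\mathcal{M}$ with the set of all maximal pieces, and the surjection $\cP\to\mathcal{M}$ are sound, and they give a correct and genuinely different proof of one implication: if every piece of $\cP$ is maximal, then $\cP=\mathcal{M}$, which has minimum cardinality. The gap is in the converse implication, at the very last step: you assert that $M_0\setminus P_0$ is a \emph{nonempty open set} whose disjointness from all pieces contradicts density of $\bigcup_{P\in\cP}P$. But a difference of open sets need not be open, and here it can even have empty interior, in which case density gives no contradiction. Concretely, take $d=2$, $f\equiv 0$, and $\cP=\set{\RR^2\setminus\set{0}}$. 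This collection is admissible under \autoref{def:CPA} (its piece is open and connected, has closure $\RR^2$, and $f$ is affine on it) and has minimum cardinality $1$, yet the piece is not maximal, since $U=\set{0}$ enlarges it to $\RR^2$. In your notation, $M_0=\RR^2$, $P_0=\RR^2\setminus\set{0}$, and $M_0\setminus P_0=\set{0}$: nonempty but with empty interior, so its disjointness from all pieces is perfectly consistent with density. This is not a repairable slip in your write-up: the example shows the implication ``minimum cardinality $\Rightarrow$ every piece maximal'' is false under the paper's definitions, so no argument can close this gap.

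For what it is worth, the paper's own proof of this direction (step (i)) commits the same error: it asserts without justification that $\interior U\neq\emptyset$, which fails exactly for $U=\set{0}$ above. What is true --- and what your surjection establishes, arguably more cleanly than the paper's merging argument --- is that the number of maximal pieces equals the minimum cardinality, that $\mathcal{M}$ attains it, and that any admissible collection consisting entirely of maximal pieces equals $\mathcal{M}$. Your bijection in the minimal case even yields the correct residual statement: each piece of a minimum-cardinality collection is an open, dense, connected subset of a unique maximal piece, possibly omitting a nowhere-dense set. Since the rest of the paper only needs the inequality $\abs{\mathcal{M}}\leq\abs{\cP}$ (maximal pieces lower-bound the size of every admissible collection), your argument salvages exactly the needed content; but the lemma as stated, your converse step, and the paper's step (i) all need repair.
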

\begin{proof}
	Let $\cP^\ast$ be a set of pieces with minimum cardinality.
	\begin{itemize}	
		\item[(i)] Suppose there exists some $P\in\cP^\ast$ that is not maximal.  
		By the definition of maximality, there exists a nonempty set $U\subseteq\RR^d\setminus P$ such that $P\cup U$ is open and connected, and $f|_{P\cup U}$ remains affine. 
		Then, $\interior U\neq \emptyset$, and since $\bigcup_{P\in\cP^\ast}\cl{P}=\RR^d$, there exists some $P'\in\cP^\ast\setminus\set{P}$ such that $P'\cap\interior U\neq\emptyset$.  
		As $P'\cap\interior U$ is open, $f$ must be affine on $\tilde{P}:=P\cup U\cup P'$. Moreover, $\tilde{P}$ is open and connected, and we conclude that $\tilde{\cP}^\ast:=\cP^\ast\setminus\set{P,P'}\cup\set{\tilde{P}}$ also satisfies \autoref{def:CPA}, but with $|\tilde{\cP}^\ast|<|\cP^\ast|$, contradicting the minimality of $\cP^\ast$.  

		\item[(ii)] Let $\cP$ be some set of maximal pieces.
		Consider an arbitrary nonempty open connected set $P'$ such that $f|_{P'}$ is affine.
		Since $\bigcup_{P\in\cP}\cl{P}=\RR^d$, there exists some $P\in\cP$ such that $P'\cap P\neq\emptyset$.
		Again, this implies that $f$ is affine on the open connected set $P'\cup P$. As $P$ is maximal, we have $P'\subseteq P$.
		In particular, since all pieces in $\cP^\ast$ are also maximal by (i), for every $P'\in\cP^\ast$, there exists some $P\in\cP$ such that $P'=P$, i.e., $\cP^\ast\subseteq\cP$. 
		Furthermore, the same argument shows that for any $P_1,P_2\in\cP$, $P_1\cap P_2\neq\emptyset$ if and only if $P_1=P_2$.
		
		If we assume $|\cP^\ast|<|\cP|$, then there must exist $P\in\cP\setminus\cP^\ast$ such that $P\cap P'=\emptyset$ for all $P'\in\cP^\ast$.  
		This contradicts the fact that $P$ is open and that $\bigcup_{P'\in\cP^\ast}\cl{P'}=\RR^d$.
	\end{itemize}
\end{proof}

We will prove a lower bound on the number of maximal pieces by induction. 
The base case follows from a result of \citet{balogh2003} on the length of monotone paths in line arrangements in the plane. These monotone paths are defined as follows.
\begin{definition}
	For a given set of lines $L=\set{l_1,\dots,l_n}$ in $\RR^2$, a path is a simple polygonal chain that joins a set of distinct vertices in $V=\set{l_i\cap l_j, i<j}$ with segments contained in lines of $L$. The length of a path is the minimum number of segments required.
	A path is monotone if there is a line such that the orthogonal projection of the path onto the line is injective.
\end{definition}
\begin{theorem}[{\citet[Corollary 1]{balogh2003}}]\label{thm:balogh_path}
	There exist $\beta,c>0$ such that for every $n\in\NN$, there exists a set of $n$ lines in the plane having a monotone path of length at least $\beta\cdot n^{2-\frac{c}{\sqrt{\log_2(n)}}}$.
\end{theorem}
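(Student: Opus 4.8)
The plan is to exhibit an explicit \emph{self-similar} family of line arrangements whose longest monotone path is $n^2$ divided only by a subpolynomial factor. Concretely, I would aim for a construction realising a monotone path of length at least $n^2 / 2^{c'\sqrt{\log_2 n}}$ using $n$ lines. The point of stating the bound this way is the elementary identity $2^{c'\sqrt{\log_2 n}} = n^{c'/\sqrt{\log_2 n}}$, so such an arrangement has a monotone path of length $\beta\, n^{2-c/\sqrt{\log_2 n}}$ after renaming constants. Thus the entire task reduces to keeping the multiplicative deficit below the crude maximum $\binom{n}{2}$ down to $2^{O(\sqrt{\log_2 n})}$, which is the signature of the recursive lower-bound constructions used for related extremal problems on line arrangements (most notably the $n\cdot 2^{\Omega(\sqrt{\log n})}$ bounds for $k$-sets).

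The engine is a \emph{substitution (blow-up) step}. Given a base arrangement $A$ of $m$ lines carrying a monotone path $\Gamma$, I would replace each line of $A$ by a narrow, strongly sheared and rescaled copy of a smaller gadget arrangement, packing each copy into a thin slab transverse to the global monotone direction. The crucial geometric point is that wherever $\Gamma$ would cross a line of $A$, the inflated path can instead be routed through a long monotone path \emph{inside} the corresponding cluster, and these local detours are then stitched together in the order in which $\Gamma$ meets the slabs. This yields a product-type growth: the monotone path length of the combined arrangement is, up to a bounded loss factor per step, the product of the base path length and the gadget path length, while the number of lines multiplies accordingly.

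To reach the target exponent I would iterate the substitution $t$ times, so that $n \approx s^{t}$ for a block size $s$, and the resulting path length grows like (per-level factor)$^{t}$ relative to the trivial $n^2$. There is a genuine tension here: taking more levels pushes the exponent toward $2$, but each composition costs a bounded multiplicative loss, so the accumulated deficit behaves like a constant raised to the power $t$. Writing $\log_2 n = t\log_2 s$ and minimising the total deficit over the two free parameters (an AM--GM balance of a term linear in $t$ against a term proportional to $(\log_2 n)/t$) drives the optimum to $t \asymp \sqrt{\log_2 n}$ and produces exactly the loss factor $2^{\Theta(\sqrt{\log_2 n})}$, hence the claimed bound.

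The hard part is \emph{maintaining global monotonicity together with general position} through the recursion. A single fixed direction must simultaneously witness the monotonicity of every local path inside every substituted copy \emph{and} of all segments connecting consecutive copies; enforcing this dictates the quantitative control on the shear amounts and slab widths, and one must also perturb to keep all $\binom{n}{2}$ intersection points distinct, so that the path is genuinely simple and its segment count is as advertised. Making these geometric estimates survive $t \asymp \sqrt{\log_2 n}$ nested levels, rather than a single substitution, is the true crux of the argument; everything else is bookkeeping of the recursion.
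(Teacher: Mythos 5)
This statement is not proven in the paper at all: it is imported verbatim as Corollary~1 of \citet{balogh2003}, and the paper's only obligation is the translation into CPA language carried out immediately afterwards. So your attempt has to be measured against the original proof of \citet{balogh2003}, not against anything in this paper.

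Your proposal correctly identifies the shape of that argument: a recursive substitution construction, a per-level multiplicative loss bounded by a constant, and the balance $t \asymp \sqrt{\log_2 n}$ between the number of levels and the block size, which is exactly where the deficit $2^{\Theta(\sqrt{\log_2 n})} = n^{\Theta(1/\sqrt{\log_2 n})}$ comes from. But as a proof it has a genuine gap, and you name it yourself: every step carrying actual mathematical content is asserted rather than established. You never exhibit a base gadget; you never state, let alone prove, the substitution lemma --- that replacing each line of an $m$-line arrangement by a sheared, rescaled copy of an $s$-line gadget yields an arrangement of $ms$ lines whose longest monotone path is at least a constant fraction of the product of the two path lengths, with monotonicity witnessed by a \emph{single} fixed direction after the substitution; and you give no quantitative control of the shear amounts and slab widths that would keep this true through $\sqrt{\log_2 n}$ nested levels. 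Your closing paragraph concedes that this is ``the true crux'', which is an accurate self-assessment: what you have is a plausible roadmap of the known proof, not a proof. For the purposes of this paper that is not fatal, since the result is cited rather than reproven, but the proposal as written cannot stand in for the citation.
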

The following corollary translates this result into the language of 
$\CPA$ functions.
\begin{corollary}\label{cor:Balogh}
	There exist $\beta,c>0$ such that for every $n\in\NN$, there exists a $\CPA$ function $f:\RR\to\RR$, with $n$ affine components and at least $\beta \cdot n^{2-\frac{c}{\sqrt{\log_2(n)}}}$ maximal pieces.
\end{corollary}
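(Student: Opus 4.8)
The plan is to reinterpret the monotone path supplied by \autoref{thm:balogh_path} as the graph of a one-dimensional $\CPA$ function. First I would invoke \autoref{thm:balogh_path} to obtain a set $L=\set{l_1,\dots,l_n}$ of $n$ lines together with a monotone path $\pi$ of length $\ell\geq\beta\cdot n^{2-\frac{c}{\sqrt{\log_2(n)}}}$. By the definition of monotonicity there is a line onto which the orthogonal projection of $\pi$ is injective; applying a rotation of $\RR^2$ I may assume this line is the $x$-axis, so that $\pi$ projects injectively onto the $x$-axis. A rotation is an isometry: it maps lines to lines and carries $\pi$ to a congruent path with the same vertices and the same number of segments, so neither the count of lines nor the length of the path is affected.

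Since the projection of $\pi$ onto the $x$-axis is injective, $\pi$ is the graph of a continuous function $g\colon I\to\RR$, where $I$ is the compact interval obtained as that projection. In particular no segment of $\pi$ is vertical, so each line carrying a segment is non-vertical and is therefore the graph of a unique affine function $\RR\to\RR$. Consequently $g$ is $\CPA$ on $I$, and its affine components are among these at most $n$ affine functions. Because the \emph{length} $\ell$ is the minimum number of segments, consecutive segments lie on distinct lines and hence realise distinct affine functions; the breakpoints between them are therefore genuine, and $g$ has exactly $\ell$ maximal pieces on $I$.

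It then remains to pass from $I$ to all of $\RR$. I would extend $g$ to $f\colon\RR\to\RR$ by prolonging the first and last segments to infinite rays, setting $f$ equal to the affine function of the leftmost (respectively rightmost) segment on the ray to the left (respectively right) of $I$. The resulting $f$ is continuous and $\CPA$, its affine components are unchanged, and the only effect on the pieces is to enlarge the two outer intervals; no pieces are merged, so $f$ still has at least $\ell$ maximal pieces, each maximal because $f$ changes affine component at every interior breakpoint. By \autoref{prop:min_collection_eq_max_pieces} this also realises the minimum number of pieces. Finally, should $\pi$ use fewer than $n$ distinct lines, I would append a short zigzag built from the missing affine functions, chosen with pairwise distinct slopes, on the left ray; this raises the component count to exactly $n$ while only increasing the number of pieces, so the lower bound is preserved.

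The step I expect to require the most care is the correspondence between the combinatorial length of $\pi$ and the number of \emph{maximal} pieces of $f$: one must confirm that minimality of the segment count forces consecutive segments onto distinct lines, so that each yields a separate maximal piece, and that neither the rotation nor the extension to $\RR$ collapses two pieces into one. The remaining points—checking that every line carrying a path segment is non-vertical after the rotation, and hence defines a genuine affine function on $\RR$—are routine bookkeeping.
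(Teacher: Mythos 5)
Your proof takes essentially the same route as the paper's: interpret the monotone path from \autoref{thm:balogh_path} as the graph of a CPA function over its projection onto the line of monotonicity, extend the outermost segments to infinite rays, and identify the path length with the number of maximal pieces. The extra care you take---noting that minimality of the segment count forces consecutive segments onto distinct lines, and padding the component count up to exactly $n$---only makes explicit bookkeeping that the paper leaves implicit, so this is the same argument, if anything slightly more rigorous.
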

\begin{proof}
	Consider the set of lines given by \autoref{thm:balogh_path}, and define the $x$-axis as the line of monotonicity of the path.
	Each line that contains a segment of the path cannot be orthogonal to the $x$-axis.  
	Thus, it can be expressed as the graph of an affine function with respect to $x$.  	
	The entire path then forms the graph of a CPA function whose pieces correspond to the orthogonal projections of the segments onto the $x$-axis.  
	Moreover, the leftmost and rightmost pieces are extended to $-\infty$ and $\infty$, respectively.
	The number of maximal pieces equals the length of the path.
\end{proof}	

\begin{figure}[h!]
	\centering
	\def\zmax{2.5}
\tikzsetnextfilename{sawTooth}
\begin{tikzpicture}[very thick]
	\begin{axis}[
		width=.6\textwidth,
		height=.3\textwidth,
		xmin=-.3, xmax=6.3,
		ymin=-.3*\zmax,%
		xtick={0,...,6},
		xticklabels={0,...,5,$6=2m$},
		ytick={0,\zmax},
		yticklabels={$z_{\min}$, $z_{\max}$},
		legend pos=north west,
		ymajorgrids=true,
		grid style=dashed,
		]
		
		\addplot[
		color=blue,
		very thick,
		]
		coordinates {
			(-.3,-.3*\zmax)(0,0)(1,\zmax)(2,0)(3,\zmax)(4,0)(5,\zmax)(6,0)(6.3,-.3*\zmax)
		};

		\coordinate (TI) at (axis cs: 0,2);		%
		\coordinate (TM) at (axis cs: 0,6);		%
		\coordinate (TO) at (axis cs: 0,10);		%
		\coordinate (TR) at (axis cs: 0,12);		%

	\end{axis}
\end{tikzpicture}                   
	\caption{$3$-sawtooth function.}
	\label{fig:saw_tooth}
\end{figure}
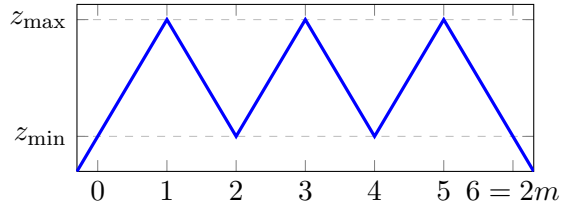

The functions defined next help to replicate the pieces of a CPA function along an additional dimension.

\begin{definition}
	For $z_{\min},z_{\max}\in\RR$ and $m\in\NN$, the $m$-sawtooth function between $z_{\min}$ and $z_{\max}$ is the linear spline interpolating the points $\set{(i,z_i)}_{i=0}^{2m}$, where $z_i=z_{\min}$ if $i$ is even, and $z_i=z_{\max}$ otherwise.
	Outside the interval $[0,2m]$, the function is extended linearly, as illustrated in \autoref{fig:saw_tooth} for the case $m=3$.
\end{definition}

\begin{lemma}\label{lem:LB_counting_lemma}
	Let $f:\RR^d\to\RR$ be a $\CPA$ function with $p$ maximal pieces.
	Suppose that $z_{\min}<\inf f\leq \sup f < z_{\max}$ for some $z_{\min},z_{\max}\in\RR$. Let $s:\RR\to\RR$ be the $m$-sawtooth function between $z_{\min}$ and $z_{\max}$. Then, \begin{equation*}
		h:(x,t)\in\RR^{d+1}\mapsto\min(f(x),s(t))
	\end{equation*}
	is a $\CPA$ function with at least $m\cdot p$ maximal pieces.
\end{lemma}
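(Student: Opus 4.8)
The plan is to exhibit $mp$ pairwise distinct maximal pieces of $h$, one for each pair consisting of a maximal piece $P$ of $f$ and one of the $m$ peaks of the sawtooth. Write $a_P(x):=f|_P$ for the affine component on $P$. Since $\sup f<z_{\max}$, near each peak $t=2j-1$ (for $j\in[m]$) the set $I_j:=\set{t:s(t)>\sup f}$ restricted to the $j$-th tooth is a nonempty open interval, and for $(x,t)\in P\times I_j$ we have $s(t)>\sup f\geq f(x)$, so $h(x,t)=f(x)=a_P(x)$. Thus $P\times I_j$ is a nonempty open connected set on which $h$ is affine. First I would record that $h$ is itself $\CPA$: it is the pointwise minimum of the two $\CPA$ functions $(x,t)\mapsto f(x)$ and $(x,t)\mapsto s(t)$ on $\RR^{d+1}$, and $\CPA$ functions are closed under $\min$ (the min is continuous and its distinct affine components remain finite). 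Consequently $h$ admits a finite set of maximal pieces, and by \autoref{prop:min_collection_eq_max_pieces} these are pairwise disjoint and every open connected set on which $h$ is affine is contained in exactly one of them. Let $\hat P_{P,j}$ denote the unique maximal piece containing $P\times I_j$; since $\hat P_{P,j}$ contains the open set $P\times I_j$ on which $h=a_P(x)$, and two affine functions agreeing on an open set coincide, $h=a_P(x)$ (in particular, independent of $t$) holds on all of $\hat P_{P,j}$.

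The technical heart is the claim that on $\hat P_{P,j}$ one in fact has $f(x)\leq s(t)$ everywhere, so that $h=f\circ\Pi$ there and $f=a_P$ on the projection $\Pi\hat P_{P,j}$. Indeed, if some point of $\hat P_{P,j}$ satisfied $f(x)>s(t)$, then by continuity $h=s(t)$ on a whole ball inside $\hat P_{P,j}$; but $h=a_P(x)$ there is independent of $t$, forcing $s$ to be constant on an interval, which is impossible since the sawtooth has only nonzero slopes. This ``no-flat-in-$t$'' observation is the step I expect to be the main obstacle to phrase cleanly, as it is precisely what prevents a maximal piece from leaking out of the region where $h$ tracks $f$ into the region where $h$ tracks $s$. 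Granting it, $\Pi\hat P_{P,j}$ is an open connected set (the continuous image of a connected set, and open because $\Pi$ is an open map) that contains $P$ and on which $f=a_P$.

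Using this claim I would then prove distinctness of the $\hat P_{P,j}$ in cases. For a fixed peak $j$ but $P\neq P'$: if $a_P\neq a_{P'}$ the affine components differ, so the pieces cannot coincide; if $a_P=a_{P'}=:a$, then a common maximal piece would yield, via the claim, a single open connected set $\Pi\hat P$ containing both $P$ and $P'$ with $f=a$ affine on it, contradicting the maximality of $P$ for $f$. For distinct peaks $j\neq j'$: the claim forces $f(x)\leq s(t)$ on $\hat P_{P,j}$, but at every valley $t=2i$ one has $s(2i)=z_{\min}<\inf f\leq f(x)$, so $\hat P_{P,j}$ meets no hyperplane $\set{t=2i}$; being connected, it lies strictly between two consecutive valleys, i.e.\ in the slab $\set{(x,t):2j-2<t<2j}$, and these slabs are disjoint for distinct $j$. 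Hence the $\hat P_{P,j}$ form $mp$ pairwise distinct maximal pieces, giving the bound. Note that both hypotheses are used exactly here: $z_{\min}<\inf f$ makes the valleys separate the peaks, while $\sup f<z_{\max}$ makes each $I_j$ nonempty so that $h$ genuinely reproduces $f$ near every peak.
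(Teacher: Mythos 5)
Your argument is correct, and its skeleton is the same as the paper's: for each maximal piece $P$ of $f$ and each peak $2j-1$ of the sawtooth you take the open product $P\times I_j$, on which $h=a_P\circ\Pi$, and pass to a maximal piece $\hat P_{P,j}$ of $h$ containing it; the substance in both proofs is showing that these $mp$ pieces are pairwise distinct. There you reorganize the logic. The paper assumes two of them coincide and joins the two marked points by a path inside the common piece, handling distinct peaks by a valley-crossing argument (the path must meet $\set{t=i+1}$, where $h=z_{\min}$) and distinct pieces of $f$ by maximality of $X_k$ together with the fact that $s$ is constant on no interval. You instead promote that last observation to a global claim --- $f(x)\le s(t)$ on all of $\hat P_{P,j}$, so a maximal piece never leaks into the region where $h$ tracks $s$ --- and then both cases fall out: distinct peaks force confinement to the disjoint open slabs $\set{(x,t):2j-2<t<2j}$, and a shared piece for $P\ne P'$ at the same peak contradicts maximality of $P$ after projecting by $\Pi$. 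The underlying mechanisms are identical (valleys separate pieces because $z_{\min}<\inf f$; distinct pieces of $f$ stay separate because $s$ has no flat parts and the $X_k$ are maximal), so this is a repackaging rather than a genuinely new route, but it buys something concrete: the paper's valley-crossing step asserts $z_{\min}<f_k(x)$ at a crossing point $(x,i+1)$ even though $x$ may lie outside $X_k$, where $f_k>z_{\min}$ is not immediate; your global claim renders that step unnecessary, and the local-flatness argument you make global is exactly what one would use to repair it.
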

\begin{proof}
	Let $\set{X_k}_{k=1}^p$ be the $p$ maximal pieces of $f$ with corresponding affine components $f_k$.
	Obviously $h\in\CPA$ with some set of maximal pieces $\cP$.
	
	For every $i\in\set{1,3,5,\dots,2m-1}$ and $k\in [p]$, the pair $(k,i)$ identifies a maximal piece of $h$ in the following way:
	Since $i$ is odd, $s(i)=z_{\max}>f(x)+\eps$ for all $x\in X_k$, and some $\eps>0$. Continuity of $s$ yields an open interval $I\ni i$, such that $s(t)>f(x)$ for all $(x,t)\in X_k\times I$.
	By definition of $h$, we have $h(x,t)=f(x)=f_k(x)$ for all $(x,t)\in X_k\times I$.
	As $X_k\times I$ satisfies \autoref{def:CPA}, there must be a maximal piece $P_{ki}\in\cP$ containing $X_k\times I\subseteq P_{ki}$.
	
	For two distinct pairs $(k,i)\neq(l,j)$, we have $P_{ki}\neq P_{lj}$, as the following arguments show. Assume that $P_{ki}= P_{lj}=:P$, and therefore $h(x,t)=f_k(x)$ for all $(x,t)\in P$.
	Let $x_k\in X_k$ and $x_l\in X_l$ be arbitrary. We know that $(x_k,i),(x_l,j)\in P$ by definition, and since pieces are connected, these two points are connected by a path $\gamma\subset P$.
	\begin{itemize}
		\item case $i\neq j$: Assume that $i<j$. Then, since $i,j$ are odd, also $i<i+1<j$.
		Therefore, there is $x\in\RR^d$ such that $(x,i+1)\in\gamma$.
		Since $i+1$ is even, we get \begin{equation*}
			h(x,i+1)=\min(f(x),s(i+1))=\min(f(x),z_{\min})= z_{\min} < f_k(x),
		\end{equation*}
		which contradicts $(x,i+1)\in\gamma\subset P$.
		\item case $k\neq l$ (cf. \autoref{fig:lifted_function}):
		Using the linear projection $\Pi$ defined in \eqref{eq:def:Projection}, the set $\Pi P\supsetneq X_k$ is open and connected. Since $X_k$ is maximal, there exists $x_0\in \Pi P$ such that $f(x_0)\neq f_k(x_0)$.
		By the definition of $\Pi$, there exists $t_0$ such that $(x_0,t_0)\in P$.
		Since $h(x_0,t_0)=\min(f(x_0),s(t_0))=f_k(x_0)$, we must have $s(t_0) < f(x_0)$.
		By the continuity of $s$, there exists an interval $J \ni t_0$ such that $h(x_0, t) = s(t)$ for all $t \in J$.  
		In particular, $s(t)=h(x_0,t)=f_k(x_0)$ for all $t\in J$ with $(x_0,t)\in P$, which is a contradiction to $f_k(x_0)$ being constant while $s$ is not constant on any interval.
	\end{itemize}
	In total, we can associate to every pair $(k,i)\in [p]\times \set{1,3,\dots,2m-1}$ a maximal piece $P_{ki}\in\cP$ and all the $P_{ki}$ are distinct. Thus $|\cP|\geq m\cdot p$.
\end{proof}

\begin{figure}[h!]
	\centering
	\tikzsetnextfilename{lifted_function}
\begin{tikzpicture}[very thick]
	\begin{axis}[
		width=7cm,
		height=5cm,
		xmin=-1, xmax=4.4,
		ymin=0,ymax=4,
		xtick={-1,0,.4,1,2,3,4},
		xticklabels={-1,0,$t_0$,1,2,3,4},
		ytick={0,1,2,2.5,3,4},
		yticklabels={0,1,2,$x_0$,3,4},
		axis x line=bottom,
		axis y line=left,
		xlabel={$t$},
		ylabel={$x$},
		xlabel style={at={(axis description cs: 1,0)}, anchor=west},
		ylabel style={at={(axis description cs: 0,1.15)}, anchor=south, rotate=90},
		very thick,
		]
		
		\begin{scope}[on background layer]
			\draw[pattern={Lines[angle=0,distance=9pt,line width=.3pt]},pattern color=gray,draw=none] 
			(axis cs: -1,0) -- (axis cs: 4.4,0) -- (axis cs: 4.4,4) -- (axis cs: -1,4) -- cycle;
		\end{scope}   
		
		\pgfplotsinvokeforeach{0,2} {
			\draw[fill=white,postaction={draw,pattern={Lines[angle=90,distance=9pt,line width=.3pt]}},pattern color=gray] 
				(axis cs: #1+0.4,0) -- (axis cs: #1+1.6,0) -- (axis cs: #1+1.6,1) -- (axis cs: #1+1.1,2) -- (axis cs: #1+1.6,3) -- (axis cs: #1+1.6,4.4) -- (axis cs: #1+0.4,4.4) -- (axis cs: #1+0.4,3) -- (axis cs: #1+0.9,2) -- (axis cs: #1+0.4,1) -- cycle;
			\draw (axis cs: #1+1.1,2) -- (axis cs: #1+0.9,2);
			\draw (axis cs: #1+0.4,1) -- (axis cs: #1+1.6,1);
			\draw (axis cs: #1+0.4,3) -- (axis cs: #1+1.6,3);
		}
		\draw (axis cs: 2,-100) -- (axis cs: 2,100);
		
		\draw[dashed] (axis cs: .4,1) -- (axis cs: .4,3);
		
		\fill[] (axis cs: .4,2.5) circle (1.5pt) node[left,inner sep=2pt] {$(x_0,t_0)$};
		
		\node[] at (axis cs: 1,3.5) {$P_{l1}$};
		\node[] at (axis cs: 1,0.5) {$P_{k1}$};
	\end{axis}

	\begin{axis}[
		width=3.3cm,
		height=5cm,
		xmin=0, xmax=1.1,
		ymin=0,ymax=4,
		xtick={0.4,0.9},
		axis x line=bottom,
		axis y line=none,
		x dir=reverse,
		xlabel={$f(x)$},
		xlabel style={xshift=-0.1cm,yshift=0.1cm},
		xmajorgrids=true,
		grid style=dashed,
		very thick,
		at={(-2.5cm,0)}
		]
		
		\draw (axis cs: 0.4,-10) -- (axis cs: 0.4,1) -- (axis cs: 0.9,2) -- (axis cs: 0.4,3) -- (axis cs: 0.4,7);
	\end{axis}
	
	\begin{axis}[
		width=7cm,
		height=3cm,
		xmin=-1.4, xmax=4.4,
		ymin=-.4,ymax=1.1,
		ytick={0,1},
		axis x line=none,
		axis y line=left,
		ylabel={$s(t)$},
		ylabel style={yshift=-0.2cm},
		ymajorgrids=true,
		grid style=dashed,
		very thick,
		at={(0,-2.1cm)},
		]
		
		\draw (axis cs: -100,-100) -- (axis cs: 1,1) -- (axis cs: 2,0) -- (axis cs: 3,1) -- (axis cs: 104,-100);
	\end{axis}
\end{tikzpicture}
	\caption{The middle figure shows the pieces of $h(x,t)=\min(f(x),s(t))$, where $f(x)$ and $s(t)$ are plotted along the left and bottom axes, respectively. 
			$h(x,t)=s(t)$ in horizontally patterned pieces and $h(x,t)=f(x)$ in vertically patterned pieces. 
			For the dashed segment $L$, we have $h|_{L}=h|_{P_{k1}}=h|_{P_{l1}}=0.4$. However, no $(x_0,t_0)\in L$ has a neighbourhood $U$ where $h|_U=0.4$, implying that $(x_0,t_0)$ is not contained in any piece with affine component $0.4$.}
	\label{fig:lifted_function}
\end{figure}

\begin{lemma}\label{cor:lift}
	Let $d\in\NN$.
	Assume that $\set{f_n}_{n\in\NN}$ is a family of $\CPA$ functions $f_n:\RR^d\to\RR$ consisting of at most $\alpha\cdot n$ affine components and having at least $\beta\cdot n^r$ maximal pieces for some constants $\alpha,\beta>0$.
	Then, there exists also a family $\set{h_n}_{n\in\NN}$ of $\CPA$ functions $h_n:\RR^{d+1}\to\RR$ consisting of at most $(\alpha+3) \cdot n$ affine components and having at least $\beta\cdot n^{r+1}$ maximal pieces.
\end{lemma}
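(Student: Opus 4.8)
The plan is to reduce everything to the sawtooth lifting of \autoref{lem:LB_counting_lemma}, applied with the choice $m=n$. That lemma takes a \emph{bounded} $\CPA$ function with $p$ maximal pieces and produces one on $\RR^{d+1}$ with at least $m\cdot p$ maximal pieces, while introducing the $2m$ affine components carried by the $m$-sawtooth. With $m=n$ and $p\ge\beta n^r$ this already yields the desired $\ge\beta n^{r+1}$ maximal pieces. The one hypothesis of \autoref{lem:LB_counting_lemma} that the given family need not satisfy is boundedness: the $f_n$ (for instance those of \autoref{cor:Balogh}, whose outer pieces run off to $\pm\infty$) are typically unbounded. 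So the first step is a truncation that makes each $f_n$ bounded while adding at most two affine components and without decreasing the number of maximal pieces.

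For the truncation I would fix, for each maximal piece of $f_n$, one point in it; let $c_1$ be below and $c_2$ above the finitely many values of $f_n$ at these points, and set $\tilde f_n:=\min(\max(f_n,c_1),c_2)$. Then $\tilde f_n$ is $\CPA$ with range in $[c_1,c_2]$, and its affine components are the two constants $c_1,c_2$ together with those components of $f_n$ that take a value in $(c_1,c_2)$; hence $\tilde f_n$ has at most $\alpha n+2$ affine components. The clamp map $v\mapsto\min(\max(v,c_1),c_2)$ equals $v$ precisely on $[c_1,c_2]$ and lands in the open interval $(c_1,c_2)$ precisely when $v\in(c_1,c_2)$, so on the open slab $S:=\{c_1<f_n<c_2\}$ we have $\tilde f_n=f_n$. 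Each chosen point lies in $S$, so every maximal piece $X_k$ of $f_n$ meets $S$ and therefore lies inside a maximal piece $\tilde X_k$ of $\tilde f_n$ carrying the same affine component.

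The heart of the matter, and the step I expect to be the main obstacle, is that the assignment $X_k\mapsto\tilde X_k$ is injective, i.e.\ clamping merges no two distinct maximal pieces. Suppose $\tilde X_k=\tilde X_l$. Then $\tilde f_n$ equals a single affine function $\ell$ on this open connected set, forcing $f_k=f_l=\ell$. Since $\tilde f_n\in[c_1,c_2]$, we get $\ell\in[c_1,c_2]$ on $\tilde X_k$; and as $\ell$ cannot attain the value $c_1$ or $c_2$ at an interior point of an open set on which it is clamp-fixed, in fact $\ell\in(c_1,c_2)$ throughout $\tilde X_k$, so that $f_n=\ell$ there. By maximality of $X_k$ and $X_l$ this gives $\tilde X_k\subseteq X_k$ and $\tilde X_k\subseteq X_l$, whence $X_k\cap X_l\neq\emptyset$; but distinct maximal pieces are disjoint (by \autoref{prop:min_collection_eq_max_pieces}), so $X_k=X_l$, a contradiction. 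Consequently $\tilde f_n$ has at least $\beta n^r$ maximal pieces.

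Finally I would apply \autoref{lem:LB_counting_lemma} to the bounded function $\tilde f_n$ with any $z_{\min}<c_1$ and $z_{\max}>c_2$ and the $n$-sawtooth $s$, obtaining $h_n(x,t):=\min(\tilde f_n(x),s(t))$ with at least $n\cdot\beta n^r=\beta n^{r+1}$ maximal pieces. Its affine components split into those depending only on $x$ (the at most $\alpha n+2$ components of $\tilde f_n$, all with zero $t$-slope) and those depending only on $t$ (the $2n$ segments of the $n$-sawtooth, whose nonzero $t$-slope keeps them distinct from the former). Hence $h_n$ has at most $(\alpha+2)n+2\le(\alpha+3)n$ affine components once $n\ge2$, the finitely many smaller indices being irrelevant to the asymptotic lower bound, and the family $\{h_n\}$ has the claimed properties.
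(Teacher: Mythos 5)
Your proof is correct and follows essentially the same route as the paper: truncate $f_n$ to a bounded range (the paper clamps to the range of $f_n$ on a ball meeting $\beta n^r$ maximal pieces) and then apply \autoref{lem:LB_counting_lemma} with $m=n$. Your version is in fact more careful on two points the paper glosses over --- you prove injectivity of the piece assignment under clamping, and you take $z_{\min}<c_1$, $z_{\max}>c_2$ so that the strict-inequality hypothesis of \autoref{lem:LB_counting_lemma} genuinely holds; the only blemish is the overstated claim that $X_k$ ``lies inside'' $\tilde X_k$ (in general only the part of $X_k\cap S$ around the chosen point lies in $\tilde X_k$), but your subsequent argument never uses more than that.
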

\begin{proof}
	Let $n\in\NN$ be fixed.
	By assumption, there exists a ball $B\subset \RR^d$ such that $B\cap P\neq\emptyset$ for at least $\beta n^r$ different maximal pieces $P$ of $f_n$. 
	Continuity of $f_n$ gives $z_{\min},z_{\max}\in\RR$ such that $f_n(B)\subseteq (z_{\min},z_{\max})$. We define $\bar{f}_n(x):=\min(z_{\max},\max(z_{\min},f_n(x)))$, which has at most $\alpha\cdot n+2$ affine components and at least $\beta n^r$ pieces.
	In particular $z_{\min}\leq \bar{f}_n(x)\leq z_{\max}$ for all $x\in\RR$.
	Applying \autoref{lem:LB_counting_lemma} with $m=n$ to $\bar{f}_n$, gives $h_n:\RR^{d+1}\to\RR$ with at least $\beta n^{r+1}$ maximal pieces. 
	Each affine component of $h_n$ is either of the form $(x,t)\mapsto s_0(t)$ for some affine component $s_0$ of $s$, or of the form $(x,t)\mapsto f_0(x)$ for some affine component $f_0$ of $\bar{f}_n$. 
	Thus, $h_n$ has at most $\alpha\cdot n+2+2n\leq (\alpha+3)\cdot n$ affine components.
\end{proof}
Using this result, the construction from \autoref{cor:Balogh} can be lifted into higher dimensions as follows.
\begin{lemma}\label{cor:Balogh_dDim}
	For every $d\in\NN$, there exist $\alpha,\beta,c>0$ such that for every $m\in\NN$, there is a $\CPA$ function $f_m:\RR^d\to\RR$ with at most $\alpha\cdot m$ affine components and at least $\beta \cdot m^{d+1-\frac{c}{\sqrt[]{\log_2(m)}}}$ maximal pieces.
\end{lemma}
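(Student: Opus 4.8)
The plan is to argue by induction on the dimension $d$, using \autoref{cor:Balogh} as the base case and \autoref{cor:lift} for the inductive step. For the base case $d=1$, \autoref{cor:Balogh} directly supplies constants $\beta,c>0$ and, for every $m\in\NN$, a CPA function $\RR\to\RR$ with exactly $m$ affine components and at least $\beta\cdot m^{2-\frac{c}{\sqrt{\log_2(m)}}}$ maximal pieces. Regarding this as a family $\set{f_m}_{m\in\NN}$, it satisfies the claim for $d=1$ with $\alpha=1$, since $d+1=2$ in this case.

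For the inductive step, I would suppose that for some dimension $d'$ there is already a family $\set{f_m}_{m\in\NN}$ of CPA functions $\RR^{d'}\to\RR$ with at most $\alpha\cdot m$ affine components and at least $\beta\cdot m^{r}$ maximal pieces, where $r=d'+1-\frac{c}{\sqrt{\log_2(m)}}$. Applying \autoref{cor:lift} to this family yields a family $\set{h_m}_{m\in\NN}$ of CPA functions $\RR^{d'+1}\to\RR$ with at most $(\alpha+3)\cdot m$ affine components and at least $\beta\cdot m^{r+1}$ maximal pieces. Since $r+1=(d'+1)+1-\frac{c}{\sqrt{\log_2(m)}}$, this is precisely the asserted bound in dimension $d'+1$, with the constant multiplying $m$ increased by $3$. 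Chaining $d-1$ applications from the base case, I collect $\alpha=1+3(d-1)=3d-2$ (any larger constant works equally well), while $\beta$ and $c$ remain exactly those furnished by \autoref{cor:Balogh}, which establishes the lemma. The bookkeeping that the output of each application again meets the hypotheses of \autoref{cor:lift} is immediate, since its stated conclusion is a family with a constant times $m$ affine components and at least $\beta\cdot m^{(\cdot)}$ maximal pieces, i.e. of exactly the form required to apply the lemma once more.

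The one point that demands care, and the step I expect to be the main obstacle, is that the exponent $r=d'+1-\frac{c}{\sqrt{\log_2(m)}}$ is \emph{not} a fixed constant but depends on the index $m$, whereas \autoref{cor:lift} is phrased for a single exponent $r$. I would argue that this causes no difficulty: the proof of \autoref{cor:lift} never uses that $r$ is constant; it only uses the lower bound $\beta\cdot m^{r}$ on the number of maximal pieces, which is then multiplied by the factor $m$ arising from the $m$-sawtooth through \autoref{lem:LB_counting_lemma}. Hence the transition $\beta\cdot m^{r}\mapsto\beta\cdot m^{r+1}$ holds verbatim for the $m$-dependent exponent, and the correction term $-\frac{c}{\sqrt{\log_2(m)}}$ is simply carried along unchanged through every application rather than accumulating. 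This is why the final exponent is $d+1-\frac{c}{\sqrt{\log_2(m)}}$ and not something that degrades with each lift.
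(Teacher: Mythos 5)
Your proof is correct and follows essentially the same route as the paper: induction on $d$ with \autoref{cor:Balogh} as the base case and \autoref{cor:lift} as the inductive step, tracking $\alpha=1+3(d-1)$ while $\beta$ and $c$ are unchanged. Your explicit justification that the $m$-dependent exponent $r=d'+1-\frac{c}{\sqrt{\log_2(m)}}$ poses no problem for \autoref{cor:lift} is a valid observation about a point the paper's two-line proof leaves implicit.
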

\begin{proof}
	For $d=1$, this is precisely \autoref{cor:Balogh}. Applying \autoref{cor:lift} yields the claim by induction.
\end{proof}

The main result of this section is the following theorem, which provides a more explicit relationship between the number of pieces and the number of affine components.

\begin{theorem}\label{thm:many_pieces_LB}
	For every $d\in\NN$, there exist $\beta,c>0$ such that, for every $n\in\NN$, there is a $\CPA$ function $f:\RR^d\to\RR$ with at most $n$ affine components and at least $\beta \cdot n^{d+1-\frac{c}{\sqrt{\log_2(n)}}}$ maximal pieces.
\end{theorem}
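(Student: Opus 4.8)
The plan is to derive the theorem from \autoref{cor:Balogh_dDim} by a reparametrisation that absorbs the multiplicative constant $\alpha$ sitting in front of the number of affine components into the leading constant, leaving the exponent essentially untouched. Let $\alpha,\beta,c>0$ and $\set{f_m}_{m\in\NN}$ be the constants and family supplied by \autoref{cor:Balogh_dDim}. First I would note that we may assume $\alpha\geq 1$: if the conclusion holds with some $\alpha<1$, then each $f_m$ has at most $\alpha m\leq m$ affine components, so the same family witnesses the statement with $\alpha$ replaced by $1$. With $\alpha\geq1$ fixed, given $n\in\NN$ I set $m:=\floor{n/\alpha}$ and take $f:=f_m$. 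Then $f$ has at most $\alpha m\leq \alpha\cdot(n/\alpha)=n$ affine components, as required, and at least $\beta\, m^{d+1-c/\sqrt{\log_2 m}}$ maximal pieces.

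It remains to show that this piece count is $\Omega\!\left(n^{d+1-c'/\sqrt{\log_2 n}}\right)$. The key step is the identity $x^{d+1-c/\sqrt{\log_2 x}}=x^{d+1}\cdot 2^{-c\sqrt{\log_2 x}}$, obtained by writing $x=2^{\log_2 x}$, which separates the leading power from the subexponential penalty. For $n$ above a fixed threshold (so that $m\geq 2$ and $\floor{n/\alpha}\geq n/(2\alpha)$) I would bound the two factors separately: from $m\geq n/(2\alpha)$ I get $m^{d+1}\geq (2\alpha)^{-(d+1)}n^{d+1}$, and from $m\leq n$ (this is where $\alpha\geq1$ is used) I get $\log_2 m\leq \log_2 n$, hence $2^{-c\sqrt{\log_2 m}}\geq 2^{-c\sqrt{\log_2 n}}$. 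Multiplying,
\begin{equation*}
m^{d+1-\frac{c}{\sqrt{\log_2 m}}} \geq \left(\frac{n}{2\alpha}\right)^{d+1} 2^{-c\sqrt{\log_2 n}} = (2\alpha)^{-(d+1)}\, n^{d+1-\frac{c}{\sqrt{\log_2 n}}},
\end{equation*}
so the theorem follows with $c':=c$ and $\beta':=\beta\,(2\alpha)^{-(d+1)}$. Finally I would dispose of the finitely many small $n$ below the threshold (including the degenerate values where $\log_2 n$ is zero or $m=0$): for each such $n$ a constant function has one piece and one affine component, and $n^{d+1-c/\sqrt{\log_2 n}}$ is a fixed finite quantity, so shrinking $\beta'$ to also meet these finitely many constraints preserves the bound uniformly.

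The only genuinely delicate point is the behaviour of the subtracted term in the exponent under the rescaling $m\approx n/\alpha$. The naive worry is that passing from $n$ to a smaller base $m$ could enlarge the penalty $c/\sqrt{\log_2 m}$ and thereby drop the exponent below $d+1-c/\sqrt{\log_2 n}$. The resolution is precisely the monotonicity $\log_2 m\leq\log_2 n$ combined with the factorisation above: shrinking the base makes the penalty \emph{factor} $2^{-c\sqrt{\log_2 m}}$ larger, not smaller, so no loss in the exponent occurs, and the entire cost of the reparametrisation is the harmless constant $(2\alpha)^{-(d+1)}$. This is why no inflation of the constant in the exponent is needed and one may keep $c'=c$.
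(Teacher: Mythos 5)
Your proposal is correct and follows exactly the paper's route: set $m:=\floor{n/\alpha}$, invoke \autoref{cor:Balogh_dDim}, and convert the bound in $m$ to one in $n$. The only difference is that you carry out explicitly the step the paper dismisses as ``a simple calculation'' (via the factorisation $x^{d+1-c/\sqrt{\log_2 x}}=x^{d+1}2^{-c\sqrt{\log_2 x}}$ and the reduction to $\alpha\geq 1$), showing in particular that one may keep $c$ unchanged and only shrink $\beta$.
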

\begin{proof}
	Fix $d\in\NN$, and let $\alpha$, $\beta_0$, $c_0$ be the constants given by \autoref{cor:Balogh_dDim}.
	For $n\in\NN$, define $m:=\floor{\frac{n}{\alpha}}$.
	Then, \autoref{cor:Balogh_dDim} provides a function $f$ with at most $\alpha m\leq n$ affine components and at least $\beta_0 m^{d+1-\frac{c_0}{\sqrt{\log_2(m)}}}$ maximal pieces. 
	Since $\frac{1}{2\alpha}n\leq m\leq \frac{1}{\alpha}n$ for $n\geq \alpha$, the new constants $\beta$ and $c$ follow from a simple calculation.
\end{proof}

\section*{Acknowledgments}
I would like to thank Henning Bruhn-Fujimoto for helpful discussions.

\printbibliography

\end{document}